\documentclass[12pt]{article}

\title{Cycles through two edges in signed graphs}

\author{
   Matt DeVos\thanks{
     Email: {\tt mdevos@sfu.ca}. 
     Supported by an NSERC Discovery Grant (Canada)}
 \and
   Kathryn Nurse\thanks{
  Email: {\tt knurse@sfu.ca}. Partially supported by NSERC (Canada).
}
}

\date{}

\pagestyle{myheadings}

\usepackage{fullpage}
\usepackage[hidelinks]{hyperref}
\usepackage{graphicx}
\usepackage{geometry}
\usepackage{amsfonts}
\usepackage{amsmath}
\usepackage{enumerate}
\usepackage{amsthm}
\usepackage{hyperref}
\usepackage[dvipsnames]{xcolor}

\geometry{
  verbose,
  dvips,
  width=400pt, marginparsep=0pt, marginparwidth=0pt,
  top=100pt, headheight=12pt, headsep=10pt, footskip=30pt,
  bottom=110pt}

\linespread{1}

\setlength{\parskip}{\medskipamount}

\begin{document}

\maketitle

\begin{abstract}
We give a characterization of when a signed graph $G$ with a pair of distinguished edges $e_1, e_2 \in E(G)$ has the property that all cycles containing both $e_1$ and $e_2$ have the same sign.  This answers a question of Zaslavsky.
\end{abstract}

\setcounter{page}{1} \setcounter{section}{0}
\newtheorem{theorem}{Theorem}[section]
\newtheorem{lemma}[theorem]{Lemma}
\newtheorem{corollary}[theorem]{Corollary}
\newtheorem{proposition}[theorem]{Proposition}
\newtheorem{observation}[theorem]{Observation}
\newtheorem{definition}[theorem]{Definition}
\newtheorem{claim}{Claim}
\newtheorem{conjecture}[theorem]{Conjecture}
\newtheorem{problem}[theorem]{Problem}

\date{}

\section{Introduction}

Throughout we assume (signed) graphs to be finite and loopless (loops add nothing to the problem under consideration), but we permit parallel edges. 
A \emph{signed} graph is a triple $G = (V,E,\sigma)$ where $(V,E)$ is a graph and $\sigma : E \rightarrow \{-1,1\}$ is a \emph{signature}.  We say that the \emph{sign} of a cycle $C \subseteq G$ is \emph{positive} \emph{(negative)} if $\sigma(C) = \prod_{e \in E(C)} \sigma(e)$ is equal to 1 $(-1)$.  If all cycles of $G$ are positive, then we call $G$ \emph{balanced} and otherwise we call $G$ \emph{unbalanced}.

In a $2$-connected signed graph $G$, a single edge $e$ appears in cycles of both signs if and only if $G-e$ is unbalanced.  For the ``only if'' direction, let $C_1,C_2$ be cycles of opposite sign containing $e$ and note that the symmetric difference of $E(C_1)$ and $E(C_2)$ is a set of edges with negative sign and even degree at every vertex (which can thus be expressed as a disjoint union of edge sets of cycles).  For the ``if'' direction, let $e = uv$, choose a negative cycle $C$ in $G-e$, and apply Menger to choose two vertex disjoint paths from $\{u,v\}$ to $V(C)$; these two paths together with $C$ and $e$ contain the desired cycles.

Our objective in this article is to extend this simple property to a pair of edges.  If $G$ is a signed graph and $e_1, e_2 \in E(G)$, then we say that $e_1$ and $e_2$ are \emph{untied} if there exist cycles containing $e_1$ and $e_2$ of both positive and negative sign, and otherwise we say that $e_1$ and $e_2$ are \emph{tied}.  Our main result is as follows.


\begin{theorem}
\label{main}
Let $G$ be a 3-connected signed graph and let $e_1,e_2 \in E(G)$ be distinct and not in parallel with any other edges.  Then $e_1$ and $e_2$ are tied in $G$ if and only if one of the following holds:
\begin{enumerate}
\item There exists a parallel class $F$ containing edges of both signs so that $F^+ = F \cup \{e_1, e_2\}$ is an edge-cut and $G - F^+$ is balanced,
\item $e_1, e_2$ are incident with a common vertex $v$ and $G-v$ is balanced,
\item $G - \{e_1, e_2\}$ is balanced.
\end{enumerate}
\end{theorem}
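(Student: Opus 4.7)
The plan is to prove the two directions separately.

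For the easy direction, for each of conditions (1), (2), (3), the approach is to apply a switching that simplifies the signature so that every cycle through $e_1$ and $e_2$ visibly has sign $\sigma(e_1)\sigma(e_2)$. In case (3) I would switch so $G - \{e_1, e_2\}$ is all-positive; in case (2) I would switch so $G - v$ is all-positive and use that any cycle through $v$ contains exactly two edges at $v$; in case (1) I would switch so $G - F^+$ is all-positive and combine the parity constraint on cut-crossings with the observation that a cycle uses at most two edges from a parallel class, and that using two forces a length-$2$ digon that cannot contain $e_1$ or $e_2$.

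For the hard direction, I would assume $e_1$ and $e_2$ are tied and first handle the case that they share an endpoint $v$. Cycles through both edges correspond to paths in $G - v$ between the other endpoints of $e_1, e_2$, and tied-ness forces all such paths to have the same sign. Since $G$ is 3-connected, $G - v$ is 2-connected; and by the same Menger-style argument as in the introduction, any 2-connected unbalanced signed graph admits paths of both signs between every pair of vertices. Hence $G - v$ must be balanced, which gives condition (2). In the remaining case, $e_1 = u_1 v_1$ and $e_2 = u_2 v_2$ have disjoint endpoints. Setting $H = G - \{e_1, e_2\}$, I may assume $H$ is unbalanced (otherwise condition (3) holds) with an unbalanced cycle $C \subseteq H$, and aim to establish condition (1). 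Cycles through both $e_1$ and $e_2$ now correspond to $2$-linkages $(P_1, P_2)$ in $H$ pairing $\{u_1, v_1\}$ with $\{u_2, v_2\}$, and tied-ness forces $\sigma(P_1)\sigma(P_2)$ to be constant across all such linkages.

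The central obstacle is to extract the cut structure of condition (1) from this rigidity. Using Menger's theorem and the 3-connectivity of $G$, I would find disjoint paths from $\{u_1, v_1, u_2, v_2\}$ into $V(C)$ and attempt to modify any $2$-linkage $(P_1, P_2)$ by detouring one of its paths around $C$, obtaining a second $2$-linkage of opposite sign. This construction can only fail if $V(C)$ is separated from the four endpoints by a small edge cut $X$ of $G$, which I would then analyze in sequence: first show $\{e_1, e_2\} \subseteq X$ (else one of $e_1, e_2$ can be incorporated into a sign-changing reroute), then show $F := X \setminus \{e_1, e_2\}$ is a parallel class between two vertices (else two non-parallel cut edges admit distinct re-routing completions of opposite sign), then show $F$ has edges of both signs (otherwise $H$ is balanced), and finally show $G - F^+$ is balanced (else an unbalanced cycle in $G - F^+$ combined with a fixed $2$-linkage yields cycles of both signs through $e_1, e_2$). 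The most delicate step is showing $F$ is a parallel class: this is where the full $3$-connectivity of $G$, and not merely its $2$-edge-connectivity, is essential.
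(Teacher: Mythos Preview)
Your treatment of the ``if'' direction and of the shared-endpoint case is fine and essentially matches the paper.  The real difficulty is the non-adjacent case, and there your plan has a genuine gap.

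The step ``find disjoint paths from $\{u_1,v_1,u_2,v_2\}$ into $V(C)$ and detour one path of a $2$-linkage around $C$; this can only fail if $V(C)$ is separated from the four endpoints by a small edge cut $X$'' is not justified and, as stated, is not true.  Three-connectivity gives you three internally disjoint paths from the linkage to $C$, but there is no mechanism that forces two of them to land on the \emph{same} $P_i$ while avoiding $P_{3-i}$, which is what a sign-changing detour needs.  The obstruction to rerouting is a vertex-interaction phenomenon (how the bridges of $C$ are arranged relative to the linkage), not an edge-cut; so the subsequent analysis of $X$ never gets off the ground.  The acknowledgement that ``showing $F$ is a parallel class is the most delicate step'' is correct, but no argument is offered, and in fact this is exactly where a naive Menger/rerouting approach runs out of traction.

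The paper organizes the non-adjacent case quite differently.  It first isolates the situation where $G$ contains a parallel pair $f,f'$ of opposite sign and reduces to Lov\'asz's theorem on cycles through three prescribed edges: either there is a cycle through $e_1,e_2,f$ (giving a hat-minor, hence untied), or $\{e_1,e_2\}$ together with the parallel class of $f$ is an edge-cut, and one then checks balancedness of the two sides to land in structure~(1) or again produce a hat-minor.  Once there is no $\pm$ parallel pair, $G$ may be taken simple, condition~(1) becomes vacuous, and the entire content is Lemma~3.2: in a simple $3$-connected signed graph with $e_1,e_2$ non-adjacent, a negative cycle in $G-\{e_1,e_2\}$ forces $e_1,e_2$ to be untied.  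The proof of that lemma is not a direct reroute; it chooses $C$ to optimize a lexicographic bridge profile and then, through a sequence of claims about bridges and tripods, exhibits a hat, target, or hedgehog minor.  Your plan would need a replacement for this lemma, and the detour heuristic you sketch does not supply one.
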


In Section~2, we provide a reduction that allows us to determine the structure of arbitrary signed graphs that are tied, meaning this result implies a full characterization of when all cycles through two given edges of a signed graph have the same sign. This problem was explicitly asked by Zaslavsky in \cite[E2]{ZaslavskyThomas2017NCiS}, but let us remark that our motivation for this work is a forthcoming application of these results in the setting of nowhere-zero flows on signed graphs.

Theorem~\ref{main} may be viewed as a signed graph generalization of the following result from Lov\'asz's problem book \cite[6.67]{LovaszCombProbExercises}.  By replacing the edge $e_3$ of Theorem \ref{lovasz} with two parallel edges, one of each sign, forming a signed graph with exactly one negative edge, one observes that Theorem \ref{main} does indeed imply Theorem \ref{lovasz}. 

\begin{theorem}\label{lovasz}[Lov\'{a}sz]
    Let $G$ be a simple $3$-connected graph and $e_1,e_2,e_3 \in E(G)$ be distinct. Then there is no cycle containing $e_1,e_2,e_3$ if and only if one of the following holds:
    \begin{enumerate}
        \item $G - \{e_1,e_2,e_3\}$ is disconnected,
        \item $e_1,e_3,e_3$ are incident with a common vertex.
    \end{enumerate}
\end{theorem}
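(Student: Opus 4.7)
The easy direction runs as follows. Because $G$ is simple and 3-connected on at least 4 vertices, it is 3-edge-connected, so every bond has at least 3 edges. Under (1), the disconnectedness of $G-\{e_1,e_2,e_3\}$ forces $\{e_1,e_2,e_3\}$ to equal a bond of $G$; since every cycle meets every bond in an even number of edges, no cycle contains all three. Under (2), any cycle visits the common vertex at most once and therefore contains at most two of the $e_i$.

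For the converse, suppose no cycle contains $e_1,e_2,e_3$ and that (2) fails; I show (1) by contradiction. Assume $G-\{e_1,e_2,e_3\}$ is connected and construct a cycle through all three. Write $e_i = a_ib_i$ and split on the endpoint-overlap pattern. If the three edges form a triangle, take it as the cycle. If some pair shares a vertex $v$ (without all three sharing), any putative cycle must use those two edges consecutively through $v$, reducing the construction to finding one or two internally vertex-disjoint paths in $G$ with $v$ (and possibly another shared vertex) deleted; Menger's theorem together with the 3-connectedness of $G$ — noting that the remaining edge $e_3$ lies inside one side of any candidate cut — supplies these paths, contradicting the assumption that no cycle exists.

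The main case is $e_1,e_2,e_3$ pairwise vertex-disjoint (six distinct endpoints). Finding a cycle through all three is equivalent to finding three internally vertex-disjoint paths in $G-\{e_1,e_2,e_3\}$ whose endpoints, combined with the three edges, form a Hamilton 6-cycle on the six endpoints. I plan to induct on $|V(G)|$; the bases $|V(G)|\leq 5$ are vacuous for this case (six distinct endpoints do not fit). For $|V(G)|\geq 6$, invoke Tutte's contractible-edge theorem to select $f\notin\{e_1,e_2,e_3\}$ such that the simplification of $G/f$ is 3-connected, and apply the inductive hypothesis to $(G/f,e_1,e_2,e_3)$. Condition (1) is preserved under contraction of a connected graph; condition (2) cannot develop, because in the pairwise-disjoint case the two endpoints of $f$ can be incident to at most two of the $e_i$, so the three edges cannot share the contracted vertex. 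The inductive hypothesis therefore produces a cycle in $G/f$ through $e_1,e_2,e_3$, which lifts to a cycle in $G$ (splicing in $f$ at the contracted vertex if the $G/f$-cycle passes through it). The main obstacle is the technical bookkeeping around simplification: parallel edges created by contraction must be removed without disturbing $e_1,e_2,e_3$, and $f$ must be chosen so that $e_1,e_2,e_3$ remain distinct non-parallel edges of the simplified $G/f$; degenerate cases in which no such $f$ exists will be dispatched by a direct structural analysis of the 3-vertex and 3-edge cuts of $G$, producing the sought cycle explicitly.
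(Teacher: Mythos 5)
This theorem is quoted in the paper from Lov\'asz's problem book (and Watkins--Mesner); the paper gives no proof of it, so there is no in-paper argument to compare against, and your attempt has to stand on its own. The easy direction is fine: in a simple $3$-connected (hence $3$-edge-connected) graph, if $G-\{e_1,e_2,e_3\}$ is disconnected then all three edges must cross the separation, so they form a bond and no cycle can meet a bond in three edges; the common-vertex case is immediate. The cases where two (or three) of the edges share endpoints also go through by the Menger arguments you sketch, though you should note that when exactly two edges share a vertex a cycle \emph{always} exists (condition (1) cannot hold there in a $3$-connected simple graph), which is consistent with your reduction.

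The genuine gap is in the main case of three pairwise disjoint edges, where your contraction induction is a plan rather than a proof, and the two difficulties you defer as ``technical bookkeeping'' and ``degenerate cases'' are exactly where the content lies. First, Tutte's theorem guarantees \emph{a} contractible edge, not one outside $\{e_1,e_2,e_3\}$, and certainly not one avoiding all six endpoints; establishing the existence of a usable $f$ requires quantitative contractibility results (e.g.\ Ando--Enomoto--Saito) or a separate structural argument, neither of which you supply. Second, and more seriously, the simplification step can break the induction: if $f=xy$ and $e_1=xa$ while $ya\in E(G)$, then after contracting $f$ the edge $ya$ becomes parallel to $e_1$ and is deleted, and it is entirely possible that $\{e_1,e_2,e_3,ya\}$ was an edge cut of $G$ (with $x,y$ on the same side), in which case $\{e_1,e_2,e_3\}$ becomes an edge cut of the simplified minor even though condition (1) fails in $G$. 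The inductive hypothesis then correctly reports that the minor has \emph{no} cycle through the three edges, and you learn nothing about $G$, where a cycle using $ya$ may well exist. This failure mode is unavoidable near the bottom of your induction (at $|V(G)|=6$ every vertex is an endpoint of some $e_i$, so every contractible edge is incident with them), so it cannot be dismissed as a degenerate corner. Until you either rule out this configuration or handle it directly, the hard direction for independent edges is not proved; a cleaner route is the classical one of subdividing each $e_i$ and applying a three-vertices-on-a-cycle argument, or following Watkins and Mesner's original case analysis.
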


\vspace{-.1cm}

Another generalization of Theorem \ref{lovasz} is the following conjecture by Lov\'{a}sz \cite{lovasz1974} and Woodall \cite{WoodallD.R1977Ccse} (independently): 
If $G$ is a $k$-connected graph, and $S \subseteq E(G)$ a set of $k$ independent edges so that either $k$ is even or $G-S$ is connected, then there is a cycle $C \subseteq G$ with $S \subseteq E(C)$. Kawarabayashi \cite{KawarabayashiKen-ichi2002OoTD} showed that $S$ is always contained in either one cycle or two vertex-disjoint cycles. And Thomassen and H\"{a}ggkvist \cite{HaggkvistRoland1982Ctse} showed that the conjecture holds if one assumes $G$ is ($k+1$)-connected.  The following well-known conjecture of Lov\'asz also concerns connectivity, paths and cycles.


\begin{conjecture}\label{rem path}[Lov\'{a}sz]
    For any natural number $k$, there exists a least natural number $f(k)$ so that for any $f(k)$-connected graph $G$ and any $x,y \in V(G)$ there exists an induced $xy$-path $P$ so that $G-V(P)$ is $k$-connected.
\end{conjecture}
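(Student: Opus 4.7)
The plan is to attempt induction on $k$, with the honest caveat up front that this is a celebrated open conjecture, resolved only for $k \leq 2$ (Thomassen for $k = 1$; Chen--Gould--Yu and independently Kriesell for $k = 2$), so the outline below identifies the natural attack and the point at which it stalls. The base case $k = 0$ is trivial with $f(0) = 2$, and $k = 1$ follows from the classical fact that in any $3$-connected graph and any two vertices $x, y$ there is an induced $xy$-path whose deletion leaves a connected subgraph.

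For the inductive step, assume $f(k)$ exists and let $G$ be $f(k+1)$-connected with $f(k+1)$ taken much larger than $f(k)$, and let $x, y \in V(G)$ be specified. I would first apply the inductive hypothesis to obtain an induced $xy$-path $P$ with $G - V(P)$ being $k$-connected. If $G - V(P)$ happens to be $(k+1)$-connected we are finished; otherwise there is a $k$-separator $S \subseteq V(G) \setminus V(P)$. The strategy is then to exploit the large gap between $f(k+1)$ and $|S| = k$: by Menger's theorem there exist many internally disjoint $xy$-paths in $G - S$, and I would try to reroute a carefully chosen subsegment of $P$ through one of them to produce an induced $xy$-path $P'$ for which $S$ is no longer a separator of $G - V(P')$. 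To make this global, one would minimize $P$ with respect to a suitable potential, for example the lexicographically ordered multiset of separator sizes of $G - V(P)$, and then argue that a minimizer cannot have a $k$-separator in its complement at all.

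The main obstacle --- and the reason this conjecture has resisted progress for $k \geq 3$ --- is the interaction between the \emph{induced} constraint on $P$ and the \emph{connectivity} demand on $G - V(P)$. Any local rerouting of $P$ generically creates chord edges from the new subpath to the unaltered portion of $P$, and repairing those chords forces further rerouting, which can easily reintroduce small separators into the complement; there is no evident monotone potential surviving this cascade. Closing the gap seems to require a genuinely new structural theorem, perhaps a higher-connectivity analogue of Thomassen's non-separating induced cycle theorem that identifies, inside every sufficiently connected graph, a "cleanable" induced substructure through any prescribed pair of vertices, rather than a purely combinatorial rerouting argument of the kind sketched above.
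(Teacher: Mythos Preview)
The statement you were asked to prove is labeled a \emph{Conjecture} in the paper, not a theorem, and the paper offers no proof of it. The authors merely record the known partial results: $f(1)=3$ (attributed to Tutte via his peripheral cycles theorem, not to Thomassen as you write) and $f(2)=5$ (Kriesell; Chen--Gould--Yu). Your proposal correctly identifies the problem as open and honestly locates where a naive inductive rerouting argument breaks down, so in that sense your assessment agrees with the paper's: there is nothing to compare your argument against, because the paper does not claim or attempt a proof.

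One small correction to your historical remarks: the paper credits Tutte, not Thomassen, with the $k=1$ case, deducing it from the fact that peripheral cycles generate the cycle space of a $3$-connected graph. Otherwise your summary of what is known and why the general case is hard is reasonable, though of course it does not constitute a proof.
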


\vspace{-.1cm}

The above conjecture also has a natural generalization to signed graphs that we state below.  To deduce \ref{rem path} from \ref{signedRemPathConj}, simply add a single negative edge $xy$ to the graph (treat all other edges as positive).  

\begin{conjecture}\label{signedRemPathConj}
    For any natural number $k$, there exists a least natural number $f'(k)$ so that for any $f'(k)$-connected, unbalanced, signed graph $G$ there exists an induced negative cycle $C$ so that $G-V(C)$ is $k$-connected.
\end{conjecture}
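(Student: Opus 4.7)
The plan is to attack Conjecture~\ref{signedRemPathConj} as a simultaneous strengthening and generalization of Conjecture~\ref{rem path}, trying to exploit the characterization in Theorem~\ref{main} to handle the signed-specific geometry. Because the signed conjecture specializes to Lov\'asz's open removable-path conjecture, any complete proof must at least recapture the difficulty of the unsigned case; so the realistic goal of a first attack is either to reduce the signed problem cleanly to the unsigned one, or to settle the signed conjecture for small $k$ using tools unavailable in the unsigned setting.

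First I would fix an induced negative cycle $C$ chosen extremally — the natural choice is a \emph{shortest} negative cycle, which is automatically induced since any chord would partition $C$ into two cycles one of which is shorter and still negative. Having fixed $C$, the task is to rule out a vertex cut $S \subseteq V(G)\setminus V(C)$ of size less than $k$ in $G - V(C)$. Assuming such an $S$ exists, I would analyze the components of $(G-V(C))-S$, push vertices of $S\cup V(C)$ around using Menger's theorem, and attempt to construct a negative cycle $C'$ that is either strictly shorter than $C$ or that avoids $S$ better than $C$ does, contradicting the extremal choice.

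This is where Theorem~\ref{main} should enter. Suppose such a small separator $S$ forces every negative cycle through a certain pair of edges $e_1,e_2$ to cross $V(C)\cup S$ in a restricted way; then $e_1$ and $e_2$ become tied in the auxiliary signed graph obtained after suppressing parts of $V(C)\cup S$. The trichotomy of Theorem~\ref{main} (a signed parallel cut, a common incident vertex, or balance after deleting two edges) is then incompatible with the assumed high connectivity of $G$, provided $f'(k)$ is chosen large enough. Iterating this, one hopes to conclude that no small separator can exist, so $G - V(C)$ is $k$-connected.

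The main obstacle is unavoidable: since Conjecture~\ref{rem path} is open even for $k=1$, no short argument can resolve Conjecture~\ref{signedRemPathConj} in full. I expect the genuinely hard step to be the case in which every shortest negative cycle passes through a fixed small vertex set; this is precisely the configuration where the \emph{tied} phenomenon of Theorem~\ref{main} must be lifted from pairs of edges to entire families of negative cycles, and I do not see how to do this without new ideas beyond those in the present paper. A reasonable intermediate target would therefore be to establish $f'(1)$ explicitly by combining Theorem~\ref{main} with the known partial results of Chen--Gould--Yu and Kawarabayashi on removable paths, leaving the general case as genuinely open.
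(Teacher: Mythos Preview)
The statement you are attempting is a \emph{conjecture}; the paper does not prove it and does not claim to. The only case the paper actually settles is $k=1$: it observes that Tutte's theorem (peripheral cycles generate the cycle space of a $3$-connected graph) immediately yields a negative peripheral cycle in any $3$-connected unbalanced signed graph, so $f'(1)=3$. There is no use of Theorem~\ref{main} in that argument at all.

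Your proposal contains a factual error that undermines the framing. You write that Conjecture~\ref{rem path} is ``open even for $k=1$''; it is not. Tutte settled $f(1)=3$, and Kriesell and (independently) Chen--Gould--Yu settled $f(2)=5$; the paper states both facts. So your claim that ``no short argument can resolve Conjecture~\ref{signedRemPathConj} in full'' because the unsigned $k=1$ case is open is based on a false premise, and your proposed ``intermediate target'' of establishing $f'(1)$ via Theorem~\ref{main} plus Chen--Gould--Yu and Kawarabayashi is aiming machinery at something already done by a one-line appeal to Tutte.

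Beyond that, the substantive part of the plan --- using Theorem~\ref{main} to rule out small separators of $G-V(C)$ --- has no concrete mechanism. You never specify which pair $e_1,e_2$ would become tied, in which auxiliary graph, or why the three outcomes of Theorem~\ref{main} would contradict high connectivity. Theorem~\ref{main} controls the sign pattern of cycles through two fixed edges; a small vertex separator of $G-V(C)$ is a statement about how components attach to $V(C)\cup S$, and nothing in your outline bridges these. The shortest-negative-cycle choice is reasonable, but the ``reroute to a shorter or better negative cycle'' step is exactly where all the difficulty of the unsigned conjecture lives, and invoking Theorem~\ref{main} does not touch it. As written, this is a sketch of hopes rather than a proof strategy.
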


Concerning the two conjectures above, Tutte \cite{TutteW.T.1963HtDa} proved the simplest of these cases, that $f(1)= f'(1) =3$. Using Tutte's language, a cycle $C$ in a graph $G$ is \emph{peripheral} if $C$ is induced and $G-V(C)$ is connected. Tutte showed that every $3$-connected graph has a peripheral cycle through any given edge, so $f(1) = 3$.  Moreover, he proves that the peripheral cycles generate the cycle space. That is to say that the peripheral cycles are not contained in any codimension $1$ subspace of the cycle space. It follows that every signed graph with a non-trivial signature has a negative peripheral cycle, and $f'(1) = 3$. 
Kriesell \cite{KriesellMatthias2001Ipi5} and independently Chen Gould and Yu \cite{ChenGuantao2003Gcap} show that $f(2) = 5$. 

And so we have provided two examples of interesting statements about graphs which have a natural and more general interpretation in the setting of signed graphs. 

\section{Reduction to 3-connected}

In the setting of signed graphs we are principally focused on the signs of cycles and not those of edges.  Accordingly, if $\sigma' : E \rightarrow \{-1,1\}$ we call $\sigma'$ \emph{equivalent} to $\sigma$ if every cycle $C \subseteq G$ satisfies $\sigma(C) = \sigma'(C)$.  If $S$ is an edge-cut of $G$ and we modify $\sigma$ by switching the sign of every edge in $S$, it is apparent that the resulting signature is equivalent to $\sigma$ and we call this a \emph{switch}.  On the other hand, if $\sigma$ and $\sigma'$ are equivalent, the set $S$ of all edges for which $\sigma(e) \neq \sigma'(e)$ must meet every cycle in a set of even size, and it follows from this that $S$ is an edge-cut.  So two  signatures are equivalent if and only if one can be obtained from the other by a switch. 

A $k$-\emph{separation} of a graph $G$ is a pair of subgraphs $(G_1,G_2)$ so that $E(G_1) \cap E(G_2) = \emptyset$,  $E(G_1) \cup E(G_2) = E(G)$, and $|V(G_1) \cap V(G_2)| = k$.  We say that the separation is \emph{proper} if $V(G_1) \setminus V(G_2) \neq \emptyset \neq V(G_2) \setminus V(G_1)$.  

The concept of edges $e_1$, $e_2$ being tied is vacuous if $e_1$ and $e_2$ are in separate blocks, so it suffices to consider 2-connected graphs.  The following observation will be helpful in making further reductions.

\begin{observation}
Let $G$ be a 2-connected signed graph, let $e_1, e_2 \in E(G)$, and let $(G_1, G_2)$ be a 2-separation of $G$ with $V(G_1) \cap V(G_2) = \{u,v\}$.  For $i=1,2$ let $G_i^+$ be obtained from $G_i$ by adding a positive edge $f_i$ with ends $u,v$.  
\begin{enumerate}
\item If $e_i \in E(G_i)$ for $i=1,2$ then $e_1$ and $e_2$ are tied in $G$ if and only if $e_i$ and $f_i$ are tied in $G_i^+$ for $i=1,2$.
\item If $e_1, e_2 \in E(G_1)$ and every edge in $E(G_2)$ is positive, then $e_1$ and $e_2$ are tied in $G$ if and only if they are tied in $G_1^+$.
\item If $e_1, e_2 \in E(G_1)$ and $G_2$ is unbalanced, then $e_1$ and $e_2$ are tied in $G$ if and only if they are tied in the graph obtained from $G_1^+$ by adding a negative edge $f_1'$ in parallel with $f_1$.
\end{enumerate}
\end{observation}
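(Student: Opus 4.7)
The plan is to exploit the observation that any cycle in $G$ either lies entirely in one side of the 2-separation $(G_1,G_2)$, or crosses the cut $\{u,v\}$ exactly twice and thus decomposes as $P_1 \cup P_2$, where each $P_i$ is a $uv$-path in $G_i$. All three claims will follow by matching signs of cycles through $e_1,e_2$ in $G$ with signs of cycles in the augmented graphs $G_i^+$ (or $G_1^{++}$ in part 3). As a preliminary step I will verify that each $G_i^+$ is 2-connected, which follows from 2-connectedness of $G$ together with properness of the separation: any path in $G-w$ can be pushed into $G_i^+-w$ by replacing each detour through $G_{3-i}$ with the edge $f_i$. This 2-connectedness guarantees that the relevant paths and cycles we need actually exist.

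For part 1, a cycle $C$ through $e_1\in E(G_1)$ and $e_2\in E(G_2)$ must use edges on both sides of the cut, so by the structural fact $C = P_1 \cup P_2$ with $P_i$ a $uv$-path in $G_i$ through $e_i$ and $\sigma(C)=\sigma(P_1)\sigma(P_2)$. Correspondingly, a cycle in $G_i^+$ through $e_i$ and $f_i$ has the form $P_i\cup\{f_i\}$ with sign $\sigma(P_i)$. Let $\Sigma_i\subseteq\{-1,+1\}$ be the set of signs of $uv$-paths in $G_i$ through $e_i$. Then $e_1,e_2$ are tied in $G$ iff the product set $\{s_1s_2 : s_1\in\Sigma_1,\ s_2\in\Sigma_2\}$ is a singleton, while $e_i,f_i$ are tied in $G_i^+$ iff $\Sigma_i$ is a singleton. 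Since the 2-connectedness of $G_i^+$ forces $\Sigma_1,\Sigma_2$ to be nonempty, a short case check shows these conditions are equivalent.

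For parts 2 and 3, with $e_1,e_2\in E(G_1)$, the cycles of $G$ through $e_1,e_2$ split into those wholly contained in $G_1$ and those of the form $C_1 \cup P_2$, where $C_1$ is a $uv$-path in $G_1$ through both $e_1,e_2$ and $P_2$ is a $uv$-path in $G_2$. The first family contributes identical signs in $G$, $G_1^+$, and $G_1^{++}$. In part 2 every $P_2$ is positive, so each $C_1\cup P_2$ has sign $\sigma(C_1)$; replacing $P_2$ by $f_1$ produces a cycle in $G_1^+$ of the same sign, making the cycle sign-sets of $G$ and $G_1^+$ identical. In part 3, $G_2$ is unbalanced, and applying the single-edge fact from the introduction to the 2-connected graph $G_2^+$ (noting $G_2^+ - f_2 = G_2$ is unbalanced) yields $uv$-paths in $G_2$ of both signs; hence for each $C_1$ the cycles $C_1\cup P_2$ supply both signs $\pm\sigma(C_1)$ in $G$, exactly matching the pair $C_1\cup\{f_1\}$, $C_1\cup\{f_1'\}$ in $G_1^{++}$ with signs $\sigma(C_1)$ and $-\sigma(C_1)$. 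In both subcases the sign sets agree.

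The main obstacle I anticipate is being careful with the existence of candidate cycles and paths, especially in part 1 where both $\Sigma_1$ and $\Sigma_2$ need to be nonempty for the case analysis to be meaningful; this is handled cleanly by the 2-connectedness of $G_i^+$, and analogous bookkeeping covers the vacuous subcases in parts 2 and 3. The remainder is routine: track how signs multiply across the separation, and verify that the added edges $f_i,f_1'$ faithfully simulate the sign contribution of the excised side.
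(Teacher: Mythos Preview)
Your proof is correct and follows essentially the same approach as the paper, which disposes of parts 1 and 2 as ``immediate from the definitions'' and handles part 3 via a negative cycle in $G_2$ together with two disjoint paths to $\{u,v\}$. Your argument simply unpacks these assertions---the decomposition of crossing cycles into $uv$-paths, the product-set analysis for part 1, and for part 3 your appeal to the single-edge fact from the introduction is exactly the negative-cycle-plus-Menger argument the paper alludes to, just invoked rather than reproved.
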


\begin{proof}
The first and second part follow immediately from the definitions.  The last part is a consequence of the observation that in this case the graph $G_2$ contains a negative cycle $C$ together with two vertex disjoint paths between $V(C)$ and $\{u,v\}$.  
\end{proof}

If we are interested in determining whether two edges in a given 2-connected signed graph $G$ are tied and $G$ has a proper 2-separation, the above observation allows us (possibly after resigning) to reduce the problem to one on smaller graphs.  Continuing in this manner, we may reduce the problem to the setting of 3-connected signed graphs.  Hence Theorem \ref{main} gives a complete answer.  

All of the steps in our reduction are reversible, so we can turn this around and provide a generic construction of signed graphs where two given edges are tied by taking the three types given in the above theorem and combining them as in the observation.  The possible structures of all such graphs can readily be determined but we found no better way of describing them than by way of the decompositions presented here.

\section{Proofs}

Let $G = (V,E,\sigma)$ be a signed graph and let $e \in E$ ($v \in V$).   To \emph{delete} the edge $e$ (vertex $v$) we remove this edge (vertex and all incident edges) from the graph and adjust the domain of $\sigma$ to remove these lost edges; we denote this new signed graph by $G \setminus e$ $(G \setminus v)$.  To \emph{contract} the edge $e$, first modify $\sigma$ by switching on an edge-cut (if necessary) so that $\sigma(e) = 1$, and then modify the graph by contracting $e$ and removing $e$ from the domain of $\sigma$; we denote the resulting signed graph by $G / e$.  If $H$ is a signed graph obtained from $G$ by a (possibly empty) sequence of edge and vertex deletions and edge contractions, we call $H$ a \emph{minor} of $G$.  Note that whenever $C \subseteq H$ is a cycle, there is a corresponding cycle $C^* \subseteq G$ containing all edges in $C$ and having the same sign as $C$.  In particular, this implies the following key property.

\begin{observation}
Let $H$ be a minor of the signed graph $G$.  If $e_1,e_2$ are untied edges of $H$, then they are also untied in $G$.  
\end{observation}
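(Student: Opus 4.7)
The plan is to reduce the observation to a direct application of the lifting property stated in the sentence immediately preceding it: every cycle $C \subseteq H$ corresponds to a cycle $C^* \subseteq G$ with $E(C) \subseteq E(C^*)$ and $\sigma(C^*) = \sigma(C)$. Once this is in hand, the result follows in a single step.

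Concretely, I would proceed as follows. Suppose $e_1$ and $e_2$ are untied in $H$, so by definition there exist cycles $C_+, C_- \subseteq H$ with $\{e_1,e_2\} \subseteq E(C_+) \cap E(C_-)$ and $\sigma(C_+) = 1$, $\sigma(C_-) = -1$. Since $e_1, e_2$ are (by hypothesis) edges of $H$, they are also edges of $G$ that survived the minor operations. Apply the lifting property to each of $C_+$ and $C_-$ to obtain cycles $C_+^*, C_-^* \subseteq G$ with $\{e_1,e_2\} \subseteq E(C_+^*) \cap E(C_-^*)$ and matching signs. This produces the required cycles of opposite sign through $e_1$ and $e_2$ in $G$, so $e_1, e_2$ are untied in $G$.

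The only thing one might want to double-check (and arguably the ``main obstacle,'' though the paper already records it) is why the lifting property holds, particularly under contraction. For edge and vertex deletion it is immediate since every cycle of $H$ is literally a cycle of $G$. For contraction of an edge $e = uv$, recall the convention that $\sigma$ is first switched on an edge-cut so that $\sigma(e) = 1$ before $e$ is removed. A cycle $C \subseteq G/e$ through the contracted vertex either lifts in $G$ to the same edge set (when both of its edges at the contracted vertex are incident to the same end of $e$) or to $C \cup \{e\}$ (when they are incident to different ends). In the second case the appended edge $e$ contributes a factor $\sigma(e) = 1$ to the sign, and switching preserves cycle signs overall; hence the sign is preserved in both cases. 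Composing across the sequence of minor operations gives the full lifting property, and the observation is established.
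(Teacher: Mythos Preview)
Your argument is correct and is exactly the approach the paper takes: the paper simply asserts the lifting property (every cycle of $H$ has a corresponding cycle in $G$ of the same sign containing all its edges) and then says ``in particular, this implies the following key property,'' so your proof just spells out what the paper leaves implicit. Your extra verification of the lifting property under contraction is more detail than the paper gives, but it is accurate.
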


Next we will introduce three signed graphs (actually families of signed graphs) each of which has a distinguished cycle $C$ that is negative together with distinguished edges $e_1, e_2$.  We call a signed graph a \emph{hat} if it consists of a length two negative cycle $C$ with vertices $x_1,x_2$ plus one additional vertex $y$ and the edges $e_i = x_i y$ for $i=1,2$.  We call a signed graph an \emph{target} if it consists of a length four negative cycle $C$ with cyclically ordered vertices $x_1, \ldots, x_4$ together with the edges $e_1 = x_1x_3$ and $e_2 = x_2 x_4$.  Finally, we call a signed graph a \emph{hedgehog} if it consists of a negative cycle $C$ with vertices $x_1, x_2, x_3$ together with two additional vertices $y_1, y_2$ and all edges between $\{y_1,y_2\}$ and $\{x_1, x_2, x_3\}$, and here we define $e_i = x_i y_i$ for $i=1,2$.  

\begin{center}
    \includegraphics[height=3.5cm]{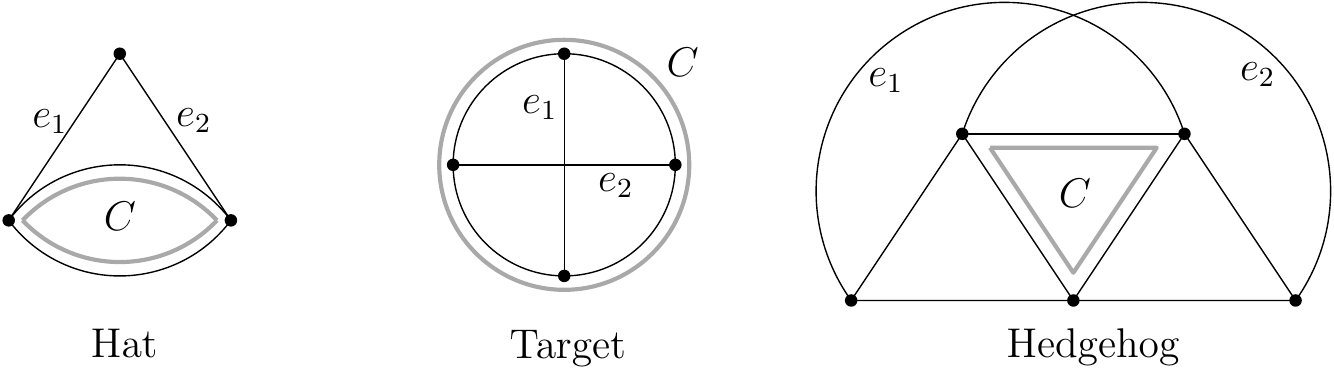}
\end{center}


For all three of the types of graphs above, the  number of cycles containing both $e_1$ and $e_2$ is even, and the symmetric difference of the edge sets of these cycles is $E(C)$ which is negative.  It follows from this that in these graphs $e_1$ and $e_2$ are untied.  The heart of our argument is to show that if our graph is not one of the named counterexamples to Theorem \ref{main}, then it contains a hat, target, or hedgehog graph as a minor.

Our arguments lean on working with a carefully chosen negative cycle $C$ in the graph and we require some notation for the manner in which the rest of the graph attaches to $C$.  For this purpose we adopt Tutte's notation that we introduce next. Let $G$ be a graph and let $H \subseteq G$.  A \emph{bridge} of $H$ is a subgraph of $G \setminus E(H)$ of one of the two forms:
\begin{itemize}
\item A single edge $uv$ (and its ends) where $u,v \in V(H)$ and $uv \not\in E(H)$
\item A component $F$ of $G - V(H)$ together with all edges of $G$ with exactly one end in $V(F)$
\end{itemize}
If $B$ is a bridge of $H$, we call a vertex in $V(B) \cap V(H)$ an \emph{attachment} vertex of $B$.



\begin{lemma}\label{mainLem}
Let $G = (V,E,\sigma)$ be a simple signed 3-connected graph, and let $e_1,e_2 \in E(G)$ be nonadjacent.  If there exists a negative cycle in $G - \{e_1,e_2\}$, then $e_1$ and $e_2$ are untied.
\end{lemma}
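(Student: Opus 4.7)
The plan is to exhibit a hat, target, or hedgehog minor of $G$ in which the two marked edges are precisely $e_1$ and $e_2$. In each of these three signed graphs the designated pair is untied, so by the minor observation above this will force $e_1, e_2$ to be untied in $G$ itself.

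Fix a negative cycle $C$ in $G - \{e_1, e_2\}$, chosen to minimize first $|E(C)|$ and, subject to that, the sum of the distances in $G - \{e_1, e_2\}$ from the endpoints of $e_1$ and $e_2$ to $V(C)$. Write $e_1 = u_1v_1$ and $e_2 = u_2v_2$; since $e_1, e_2$ are nonadjacent in the simple graph $G$ these four endpoints are pairwise distinct, and $|V(C)| \geq 3$. The proof then splits into cases according to how many of $u_1, v_1, u_2, v_2$ lie on $V(C)$ and, for those that do not, according to the bridges of $C$ in $G - \{e_1, e_2\}$ that contain them.

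The engine for building each minor is 3-connectivity: a fan version of Menger's theorem applied inside $G - \{e_1, e_2\}$ yields internally disjoint paths from any chosen triple of endpoints to distinct vertices of $V(C)$. Combining these paths with $e_1$, $e_2$, and a suitable arc of $C$ gives a subdivision of one of the three named signed graphs, which after contraction of path-interiors becomes the desired minor. The minor type that emerges is dictated by the combinatorics of the attachments: fusing a path from an endpoint of $e_1$ with one from an endpoint of $e_2$ at a common vertex (via a single contraction) produces a hat; two endpoints of $e_1$ and two of $e_2$ lying on $V(C)$ in alternating cyclic order produce a target; and the generic case of three distinct attachment points on $V(C)$ produces a hedgehog, with 3-connectivity used to furnish the second spine vertex $y_2$ and its three connections to the contracted triangle.

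The main obstacle will be the case analysis itself, specifically ruling out configurations in which the access paths collide on $V(C)$ or with each other in ways that simultaneously destroy all three minor types. The minimality of $C$ is essential here: any rerouting that would yield a shorter negative cycle, or would shorten some endpoint-to-$C$ distance, is forbidden by the choice, which excludes many troublesome coincidences among the paths and the bridges. I expect the most delicate sub-case to be the one in which all four endpoints already lie on $V(C)$ with no useful bridges off $C$, so that the only remaining freedom is the cyclic order of $u_1, v_1, u_2, v_2$ around $C$, and one must extract a target directly from chords of $C$ (or fall back to a hat/hedgehog using whatever bridges off $V(C)$ are forced by 3-connectivity).
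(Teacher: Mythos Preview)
What you have written is a plan, not a proof, and the part you label ``the main obstacle'' is in fact the entire content of the lemma. Saying that the case analysis will rule out bad collisions ``by minimality of $C$'' is not an argument until you actually show which rerouting contradicts which clause of your extremal choice. Here your choice of $C$ is the first concrete problem: minimising $|E(C)|$ (and then distances) does not obviously support the kind of rerouting you will need. The typical move is to replace an arc of $C$ by a path through a bridge to merge two bridges or to pull an endpoint of $e_i$ onto or off of $C$; such a detour usually makes $C$ \emph{longer}, so it cannot contradict a shortest-cycle choice. The paper instead selects $C$ to \emph{maximise} bridge sizes (first trying to put $e_1,e_2$ in a common bridge, then lexicographically), precisely so that every useful rerouting produces a better $C$ and hence a contradiction. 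Without a criterion of that flavour your ``minimality'' clause has no teeth.

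There is also a technical error: you invoke a fan-Menger argument ``inside $G-\{e_1,e_2\}$'' to get three internally disjoint paths to $V(C)$, but $G-\{e_1,e_2\}$ need not be $3$-connected (a $2$-separation of $G-\{e_1,e_2\}$ can exist with $e_1,e_2$ crossing it), so this step fails as stated. Finally, your guess that the delicate case is ``all four endpoints already on $V(C)$'' is misdirected; in the paper's argument one shows the opposite, that (after the right extremal choice) $e_1$ is \emph{not} incident with $V(C)$, there are exactly two bridges $B_1\ni e_1$ and $B_2\ni e_2$, and the real work is a labelling of $V(C)$ by attachment points of the block of $B_1\setminus V(C)$ containing $e_1$, which is what finally forces a target or hedgehog minor. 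None of that structure is visible from your outline.
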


\begin{proof}

Suppose for contradiction the lemma is false, and let $G$ be a counterexample so that $|V|$ is minimum.  Choose a negative cycle $C \subseteq G \setminus \{e_1,e_2\}$ subject to the following constraints: Both $e_1$ and $e_2$ are in the same bridge of $C$ if possible, subject to this the bridge of $C$ containing $e_1$ is maximum, subject to this the bridge of $C$ containing $e_2$ is maximum, and subject to this the lexicographic ordering of the sizes of the other bridges is maximized.  We proceed by proving a sequence of claims to establish the result.  

\bigskip

\noindent{(1)} Every bridge of $C$ must contain $e_1$ or $e_2$.  

\bigskip

Suppose (for a contradiction) that this does not hold and consider the smallest bridge $B$ not containing $e_1$ or $e_2$.  If $C$ has length 3, then the graph $G' = G \setminus (V(B) \setminus V(C))$ is still 3-connected and is a smaller counterexample (to see that $G'$ is 3-connected, note that any proper $k$-separation in $G'$ for $k < 3$ must have all vertices in $V(C)$ in one of the subgraphs and thus extends to a $k$-separation in $G$).  It follows that $|V(C)| \ge 4$.  We claim that there must exist distinct vertices $x_1,x_2,x_3,x_4 \in V(C)$ appearing in this cyclic order on $C$ so that $x_1,x_3 \in V(B)$ and both $x_2$ and $x_4$ are contained in bridges other than $B$. If $V(C) \subseteq V(B)$, then the 3-connectivity (and simplicity) of $G$ imply that the bridge containing $e_1$ has two non-consecutive attachments and this gives the desired vertices.  Otherwise there exists $x \in V(C)$ that is not an attachment of $B$. Choose a minimal path $P \subseteq C-x$ with ends $y,y'$ so that all attachments of $B$ lie in $V(P)$.  Then $P$ has length at least $3$, and $\{y,y'\} \in V(B)$. If no bridge other than $B$ has an attachment in $V(P) \setminus \{y,y'\}$, then this contradicts the 3-connectivity of $G$ (as deleting $y,y'$ would disconnect the graph). Therefore the vertices $x_1, \ldots, x_4$ may be chosen (as claimed).  Next, choose a path $Q \subseteq B$ with ends $x_1,x_3$ and let $R,R'$ be the two paths in $C$ with ends $x_1,x_3$.  One of $R \cup Q$ or $R' \cup Q$ must be a negative cycle for which the bridge containing $x_2$ or $x_4$ has increased in size (and no bridge apart from $B$ has decreased).  This contradicts our choice of $C$, thus proving (1).

\bigskip

\noindent{(2)} No bridge contains $e_1$ and $e_2$.  

\bigskip

Suppose (for a contradiction) that (2) is false and let $B$ be a bridge of $C$ with $e_1, e_2 \in E(B)$.  Note that by (1), $B$ must be the unique bridge of $C$.  Call a path $P \subseteq B$ \emph{good} if $e_1,e_2 \in E(P)$, both ends of $P$ are in $V(B) \cap V(C)$, but no interior vertex of $P$ lies in $V(B) \cap V(C)$.  If there is a good path $P$, then $C \cup P$ has a hat-minor and $e_1,e_2$ are untied, giving a contradiction.  So it will suffice to prove that a good path exists.  Let $B' = B \setminus V(C)$. If there exists a cycle $C' \subseteq B'$ with $e_1,e_2 \in E(C')$ then we may apply Menger's theorem to choose three vertex disjoint paths between $C$ and $C'$ and these paths together with $C'$ contain a good path.  It follows from this (and the non-adjacency of $e_1, e_2$) that $e_1$ and $e_2$ are in different blocks of the graph $B'' = B' \cup \{e_1, e_2\}$.  Choose a maximal list of blocks $H_1, \ldots, H_k$ in the graph $B''$ so that $V(H_i) \cap V(H_{i+1}) \neq \emptyset$ for $1 \le i \le k-1$ and so that $e_1$ and $e_2$ are both contained in some block from this list.  If $V(H_1) \cap V(C) \neq \emptyset$ $(V(H_k) \cap V(C) \neq \emptyset)$ then we denote this set by $X$ $(Y)$.  Note that in this case the set must consist of a single vertex, and the block consists of just one edge $e_1$ or $e_2$ (this happens only when $e_i \not\in E(B'))$.  Otherwise, it follows from the 3-connectivity of $G$ that the set $X$ $(Y)$ of vertices in $V(C)$ adjacent to a vertex in $V(H_1) \setminus V(H_2)$ $(V(H_k) \setminus V(H_{k-1}))$ has size at least 2.  It now follows from our assumptions that we may choose distinct vertices $x \in X$ and $y \in Y$.  Now using the structure of our list of blocks it is straightforward to choose a good path from $x$ to $y$, and this completes the proof of (2).

\bigskip

In the remainder of our proof we know from (1) and (2) that for $i=1,2$ there is a bridge $B_i$ containing $e_i$ and that $C$ has exactly two bridges, $B_1$ and $B_2$.  Mimicking our earlier notation, we will call a path $P \subseteq B_i$ \emph{good} if it has both ends in $V(C)$, is internally disjoint from $V(C)$, and contains the edge $e_i$.  
We need just one added bit of notation before the next claim.  
Let $\Gamma$ be a graph and let $x, x_1, x_2, x_3 \in V(\Gamma)$ be distinct.  Let $P_1, P_2, P_3$ be internally disjoint paths so that $P_i$ has ends $x$ and $x_i$ for $1 \le i \le 3$.  In this case we call the graph $T = \bigcup_{i=1}^3 P_i$ a $\{x_1, x_2, x_3\}$-\emph{tripod} and we call $P_i$ the $x_i$-\emph{leg} of~$T$.  

\bigskip

\noindent{(3)} $e_1$ is not incident with a vertex of $C$.

\bigskip

Suppose (for a contradiction) that (3) is false and let $u \in V(C)$ be incident with $e_1$.  Define $P$ to be the minimal subpath of $C \setminus u$ with the property that $V(P)$ contains all attachment vertices of $B_2$ except for (possibly) $u$.  First suppose there exists an attachment vertex of $B_1$, say $v$ in the interior of $P$.  In this case we may choose a good path $Q_1 \subseteq B_1$ from $u$ to $v$.  Next choose a path $Q_2 \subseteq B_2$ internally disjoint from $V(C)$ with the same ends as $P$.  If $Q_2$ is a good path, then $C \cup Q_1 \cup Q_2$ contains a target-minor and $e_1,e_2$ are untied.  Otherwise there is a negative cycle contained in $C \cup Q_2$ that contradicts the choice of $C$ (as it improves the size of $B_1$).  So the interior of $P$ does not contain an attachment of $B_1$, and by $3$-connectivity of $G$ it must be that $u$ is an attachment vertex of $B_2$.  Let $P'$ be the minimal subpath of $C - E(P)$ that contains all attachment vertices of $B_1$ and let $w,z$ be the ends of $P'$.  If one of $w,z$ is equal to $u$, then $\{w,z\}$ would be a 2-vertex cut contradicting our connectivity assumption, so we must have $w,z \neq u$.  It follows from the connectivity of $B_1' = B_1 \setminus V(C)$ that we may choose a $\{u,w,z\}$-tripod $T_1 \subseteq B_1$ with $V(T_1) \cap V(C) = \{u,w,z\}$ so that $e_1$ is contained in the $u$-leg of $T_1$.  (To see this, let $e_1 = uu'$ and choose vertices $w', z' \in V(B_1')$ respectively adjacent to $w, z$ and note that by the connectivity of $B_1'$ this graph must either contain a path containing $u',w',z'$ or it must contain a $\{u',w',z'\}$-tripod.)  Since $e_2$ is not incident with $u$, the 3-connectivity of $G$ implies the existence of a good path $R \subseteq B_2$ with ends $u', u''$ and $u \neq u', u''$.  It now follows from the connectivity of $B_2 \setminus V(C)$ that $R$ can be enlarged to a $\{u,u', u''\}$-tripod $T_2$ with $V(T_2) \cap V(C) = \{u,u',u''\}$ and with the property that $e \in E(T_2)$ but $e$ is not in the $u$-leg of $T_2$.  Now $C \cup T_1 \cup T_2$ contains a hedgehog minor, so $e_1, e_2$ are untied.  

\bigskip

\noindent{(4)} $|V(C)| \ge 4$.

\bigskip

Suppose for a contradiction that (4) is false and let $V(C) = \{u,u',u''\}$.  If $e_2$ is incident with a vertex in $V(C)$, then assume (without loss) that this vertex is $u$.  By the 3-connectivity of $G$ we may choose a path $P \subseteq B_1 \setminus u$ from $u'$ to $u''$ that contains $e_1$.  This path may be extended to a $\{u,u',u''\}$ tripod $T \subseteq B_1$ with the property that $e \in E(T)$ but $e$ is not in the $u$-leg of $T$.  Now form a signed graph minor $G'$ of $G$ by deleting all edges in $E(B_1) \setminus E(T)$, then deleting all isolated vertices, and then contracting edges in $T$ to reduce each leg to a single edge (in the leg containing $e_1$ contract all edges other than $e_1$).  The resulting signed graph $G'$ is 3-connected and therefore contradicts the choice of $G$.  

\bigskip

With this last claim in place, we proceed to the end of the proof.  For this task we will devise a system of labels assigned to vertices in $V(C)$ that will permit us to route good paths in $B_1$.  Let $H$ be the block of $B_1 \setminus V(C)$ that contains the edge $e_1$.  If $v \in V(C) \cap V(B_1)$ and $x \in V(H)$ we assign $v$ the label $x$ if there exists a path in the graph $B_1 \setminus E(H)$ from $v$ to $x$ that is internally disjoint from $V(C)$.  If $H$ consists of the single edge $e_1 = xy$, then (by 3-connectivity) there are at least two vertices with label $x$ and at least two with label $y$.  On the other hand if $H$ is a larger block, then (by 3-connectivity) there are at least 3 distinct labels appearing on vertices in $V(C)$.  Note that whenever $u,v \in V(C)$ are distinct vertices with distinct labels, there exists a good path from $u$ to $v$ in $B_1$.  

We claim that there exist four distinct vertices $v_1, v_2, v_3, v_4$ appearing in this cyclic order around $C$ so that $v_1,v_3$ are attachment vertices of $B_1$ and $v_2,v_4$ are attachment vertices of $B_2$.  If every vertex in $V(C)$ is an attachment of $B_1$, then this is straightforward to verify.  Otherwise choose $v_2 \in V(C) \setminus V(B_1)$ and let $P$ be the minimal path in $C - v_2$ containing all attachments of $B_1$.  If there does not exist an internal vertex of $P$ that is an attachment of $B_2$, then deleting the two ends of $P$ disconnects the graph --- a contradiction.  Therefore there is a vertex $v_4$ in the interior of $P$ that is an attachment of $B_2$ and taking $v_1,v_3$ to be the ends of $P$ give the desired vertices.

Let $Q,Q'$ be the two paths in $C$ with ends $v_2,v_4$.  First suppose that there exist $v \in V(Q) \setminus \{v_2,v_4\}$ and $v' \in V(Q') \setminus \{v_2,v_4\}$ so that $v$ and $v'$ both have distinct labels.  In this case we may choose a good path $R_1 \subseteq B_1$ from $v$ to $v'$.  Let $R_2 \subseteq B_2$ be a path internally disjoint from $V(C)$ from $v_2$ to $v_4$.  If $R_2$ is good, then $C \cup R_1 \cup R_2$ contains a target-minor and we are done.  Otherwise, $C \cup R_2$ contains a negative cycle that contradicts the choice of $C$ (as it improves the size of $B_1$).  Therefore, we may assume that there exists a vertex $x \in V(H)$ so that the only label appearing on interior vertices of $Q$ and $Q'$ is $x$.  It follows from this that $v_2$ has label $y$ and $v_4$ has label $y'$ for some $y,y' \neq x$.  By the definition of the labels, for $i=1,3$ we may choose a path $P_i \subseteq B_1$ starting at the vertex $v_i$ and ending at the vertex $x$ so that $V(P_i) \cap V(C) = \{v_i\}$ and so that $V(P_i) \cap V(H) = \{x\}$.  Now the graph $C \cup P_1 \cup P_3$ contains a negative cycle $C'$ with the property that $e_1$ and $e_2$ are in the same bridge of $C'$, thus contradicting the choice of $C$ (to see this, note that we may choose such a negative cycle $C'$ with $v_j \not\in V(C')$ for $j=2$ or $j=4$, and then both $e_1$ and $e_2$ will be in the bridge of $C'$ containing $v_j$).  This final contradiction completes the proof.
\end{proof}

With this lemma in hand we are ready to prove the main result.

\begin{proof}[Proof of Theorem \ref{main}]
The ``if'' direction is straightforward to verify.  For the ``only if'' direction, first suppose that $e_1$ and $e_2$ are incident with a common vertex $u$, say $e_i = u v_i$ for $i=1,2$.  If $G- u$ is balanced, then we have the second structure from the theorem statement.  Otherwise we may choose an unbalanced cycle $C \subseteq G - u$ and (by Menger) two vertex disjoint paths from $V(C)$ to $\{v_1, v_2\}$.  The existence of these subgraphs implies a hat-minor.  So, we may now assume that $e_1$ and $e_2$ are nonadjacent.  

Next suppose that there exist two parallel edges $f,f'$ of opposite sign. If $f,f'$ are incident with an end of $e_1$ or $e_2$, say $e_1 = uv$, $e_2 = xy$, and $f = uw$. Then (by Menger) there are two disjoint paths from $\{v,w\}$ to $\{x,y\}$ in $G \backslash u$ which implies the existence of a hat-minor. Otherwise, by Theorem \ref{lovasz} (or by an earlier result of Watkins and Mesner \cite{watkins1967cycles}) either there is a cycle containing $e_1,e_2,f$ which means a hat-minor, or $G-S$ is disconnected, where $S$ is the set of edges containing $e_1,e_2$ and all edges parallel to $f$. If $G - S$ does not contain a negative cycle, we have the first structure of the theorem statement.  If it does, choose such a cycle $C$ and choose a vertex $z$ in the component of $G - S$ not containing $C$.  It follows from the 3-connectivity of $G$ that we may choose two paths $P_1, P_2 \subseteq G \backslash u$ starting at $z$ and ending in $V(C)$ with $V(P_1) \cap V(P_2) = \{z\}$.  Now $C \cup P_1 \cup P_2$ contains a hat-minor.

So we may now assume that $G$ does not contain a negative cycle of length~2.  If $G$ has any parallel edges, then they all have the same sign and we may delete all but one of these edges without effect.  Thus we may now assume that $G$ is simple and that $e_1, e_2$ are non-adjacent.  If $G - \{e_1, e_2\}$ is balanced, then we have the third structure from the theorem statement.  Otherwise it follows from Lemma~\ref{mainLem} that $e_1$ and $e_2$ are not tied in $G$, and this completes the proof.
\end{proof}

\bibliographystyle{abbrv}
\bibliography{bib}

\end{document}